\newcommand{\baar}[1]{\overline{#1}}
\newcommand{\beq}{\begin{equation}}
\newcommand{\eeq}{\end{equation}}
\def\elabel#1{\label{e:#1}}
\def\eq#1/{(\ref{e:#1})}
\def\Section#1/{Section~\ref{s:#1}}
\def\Table#1/{Table~\ref{t:#1}}
\def\Figure#1/{Figure~\ref{f:#1}}
\theoremstyle{plain}
\newtheorem{theorem}{Theorem}%[section]
\newtheorem*{theorem*}{Theorem}
\newtheorem{lemma}{Lemma}%[section]
\newcommand{\refT}[1]{Theorem~\ref{T:#1}}
\newcommand{\refS}[1]{Section~\ref{#1}}
\newcommand{\refL}[1]{Lemma~\ref{L:#1}}
\begin{document}

\title{ { {\bf Binomial ARMA Count Series\\ from Renewal Processes} }}

\author{
{\sc Sergiy Koshkin  \&~ Yunwei Cui~ } \\
Computer and Mathematical Sciences Department\\
University of Houston Downtown\\
Houston, TX 77002, USA\\
koshkins@uhd.edu; cuiy@uhd.edu 
}

\date{}

\maketitle

\begin{abstract} This paper describes a new method for generating stationary integer-valued time series 
from renewal processes. We prove that if the lifetime distribution of renewal processes is nonlattice  
and the probability generating function is rational, then the generated time series satisfy causal 
and invertible ARMA type stochastic difference equations. The result provides 
an easy method for generating integer-valued time series with ARMA type autocovariance functions.
Examples of generating binomial ARMA$(p,p-1)$ series 
from lifetime distributions with constant hazard rates after lag $p$ are given as an illustration. 
An estimation method is developed for the AR($p$) cases. 
\end{abstract}

\vspace{.12in} 
\noindent Keywords: Integer-valued; Autoregressive Moving Average; Renewal Processes.

\vspace{.12in} 
\noindent MSC primary 37M10, secondary 62M10

\section{Introduction}
Integer-valued time series have a broad range of applications including demographic studies, business planning and risk management. Among models developed for them integer-valued autoregressive (INAR) ones appear most frequently in the literature, see McKenzie (2003) for a review. However, their applicability is limited by their autocorrellation functions always being non-negative. More recent approaches include random coefficient processes of Zhang et al. (2007), 
applications of the rounding operator of Kachour and Yao (2009), and the $p$'th order random coefficient autoregressive 
process of Wang and Zhang (2011). 

We pursue a different method of generating time series by superposing independent integer-valued renewal processes, 
which unlike INAR models can induce negative autocorrelation functions. The method was originally proposed by Blight (1989)
and developed by Cui and Lund (2009) to generate a variety of time series, Markov and long memory, with binomial and other marginals. Following Cui and Lund (2009) we choose renewal processes to be stationary from the very beginning to make the generated count process stationary. As Blight noticed, its autocovariance generating function can be easily expressed in terms of the lifetime distribution. In a couple of examples he computed it had the structure of the autocovariance of an autoregressive moving average (ARMA) count series, and he seemd to beleive this to be the case whenever the generating function is rational. The question reduces to a non-trivial factorization of the numerator 
of the generating function, which Blight performed explicitly in his examples. The main purpose of this paper is to prove that the resulting count series is always ARMA if a lifetime distribution is nonlattice and has rational probability generating function, see \refT{ARMAfac}. Our proof involves palindromic polynomials and some subtle properties of probability characteristic functions. As an illustration, we use lifetime distributions with constant hazard rates after lag $p$ to generate binomial ARMA($p,p-1$) count series and study their properties. For $p>2$ explicit ARMA factorization is not available.

The paper is organized as follows. \refS{s2} recalls the construction of renewal count processes.
In \refS{s3} we review the definition of ARMA processes and state our main result, \refT{ARMAfac}, on generation of integer-valued binomial ARMA time series. The proof is given in \refS{s4} and in \refS{s5} we apply our theorem to generate binomial ARMA$(p,p-1)$ time series from renewal processes with constant hazard rates after lag $p$, and show that the former possess the $p$'th order Markov property. Finally, we draw some conclusions.

\section{Renewal count processes}\label{s2}

This section gives a brief review of renewal processes, see Feller (1968) and Ross (1995) 
for a thorough treatment. Let $L$ be a nonnegative random variable, 
called lifetime, taking values in $\{ 1, 2, \ldots \}$
with $P(L=n)=f_n$ and $0<f_1<1$. Let $L_0, L_1, L_2, \ldots$ be 
independent nonnegative integer-valued random variables
with $L_1, L_2, \ldots$ having the same distribution as $L$. 
We allow $L_0$ to have a distribution other than $L$. Then a renewal is said to happen at time $n$ if $L_0+L_1+\cdots+L_k=n$ for some $k \geq 0$. If $L_0$ has unit mass at $0$, i.e. $L_0\equiv0$, 
the process is called non-delayed or pure, otherwise it is called delayed. 

For a non-delayed process let $u_n$ be the probability that a 
renewal occurs at time $n$, then $u_n$ satisfies
$u_0=1$ and $u_n=\Sigma_{j=0}^{n-1}u_jf_{n-j}$, $n\geq 1$. For a delayed process let $\nu_n$ be the 
probability of a renewal at time $n$, then $\nu_0=b_0$, $\nu_n=\sum^n_{k=0}b_k u_{n-k}$ for $n \geq 1$, where $b_n=P(L_0=n)$. When $L$ is nonlattice, has finite mean, and $b_n=\mu^{-1}P(L>n)$, i.e. $L_0$ has the 
so-called equilibrium or first derived distribution of $L$, 
the delayed process is stationary with $\nu_n \equiv\mu^{-1}$ (Ross, 1995). 

For a stationary renewal process define the following sequence of Bernoulli random variables:  
$X_t=1$ if a renewal occurs at time $t$, otherwise $X_t=0$. 
It can be shown that $X_t$ is strictly stationary with 
\[
\gamma(h)=\mbox{cov}(X_t, X_{t+h})=\frac{1}{\mu}(u_h-\frac{1}{\mu}).
\]

Many types of integer-valued time series with different marginal distributions 
can be generated by the above model.
If we superposition (Cox and Smith, 1954) $M$ independent and identical 
Bernoulli sequences $X_{i,t}$, $i=1,2,\ldots,M$
and define $Y_t=\sum_{i=1}^M X_{i,t}$ for $t \geq 0$, 
then $Y_t$ is strictly stationary with binomial marginal distribution. The
autocovariance of $Y_t$ is
\[
\mbox{cov}(Y_t, Y_{t+h})= 
\frac{M}{\mu} \left( u_h-\frac{1}{\mu} \right).
\]
If $L$ has a constant hazard rate after lag $1$ then ${Y_t}$ is Markov. 
Long memory binomial series can also be generated by taking $L$ with finite 
mean but an infinite second moment (see Cui and Lund, 2009, for details).

\section{ARMA processes}\label{s3}

A stationary process $X_t$ is called ARMA$(p,q)$ process if for every $t$
\[
X_t-\phi_1X_{t-1}-\cdots-\phi_p X_{t-p}=Z_t+\theta_1Z_{t-1}+\theta_{2}Z_{t-2}+\cdots+\theta_{q}Z_{t-q},
\]

\noindent where $Z_t$ is a white noise process with variance $\sigma^2$. 
It is convenient to describe ARMA$(p,q)$ processes using autocovariance generating functions.
In general, if $\gamma(h)$ is the autocovariance function of a stationary process
then its autocovariance generating function is defined by  
\[
G(z)=\sum_{h=-\infty}^{\infty}\gamma(h)z^h.
\]

\noindent For an ARMA$(p, q)$ process, the classic result shows that
\beq
G(z)=\sigma^2\frac{\theta(z)\theta(z^{-1})}{\phi(z)\phi(z^{-1})},
\elabel{gene}
\eeq

\noindent where $\phi(z)=1-\phi_1z-\phi_2z^2-\cdots-\phi_pz^p$ and $\theta(z)=1+\theta_1z+\theta_2z^2+\cdots+\theta_qz^q$ 
are called the autoregressive characteristic polynomial and  the moving average characteristic polynomial respectively. 
It can be shown that a stationary process is ARMA$(p, q)$ if its autocovariance generating function can 
be written in the form \eq gene/, where both $\phi(z)$ and $\theta(z)$ have all their roots outside the 
unit circle (see Priestley, 1981).

Now let $Y_t$ be the integer-valued time series with binomial marginal distributions defined 
in the last section. The probability generating function of lifetime $L$ is defined to be
\[
F(z):=\sum_{n=1}^\infty f_nz^n.
\] 

\noindent As shown by Blight (1989), the autocovariance generating function of $Y_t$ is given by 
\[	
G(z)=\frac{M}{\mu}\,\frac{1-F(z)F(\frac{1}{z})}{[1-F(z)][1-F(\frac{1}{z})]},
\elabel{autogen}
\]
where $M$ is the number of independent and identical renewal processes and $\mu$ is the mean of 
$L$. If $F(z)$ is rational, i.e. $F(z)=P(z)/Q(z)$ with $P(z)$ and $Q(z)$ polynomials, then
\beq
G(z)=\frac{M}{\mu}\,\frac{Q(z)Q(\frac{1}{z})-P(z)P(\frac{1}{z})}{[Q(z)-P(z)][Q(\frac{1}{z})-P(\frac{1}{z})]}.
\elabel{genandgen}
\eeq

Recall that a discrete probability distribution $P(L=n)=f_n$, 
$n\in Z$ is called lattice if it is supported on a 
sublattice of integers, i.e. there exists a $d>0$ such that 
$\sum_{k=0}^{\infty}P(L=kd)=1$. We will show that if $L$ is nonlattice 
and has a rational probability generating function, 
then \eq genandgen/ can always be factorized as in \eq gene/. More precisely, the following is true.

\begin{theorem}\label{T:ARMAfac} Let $L$ be a nonlattice distribution 
with a rational probability generating function $F(z)=P(z)/Q(z)$, written in lowest terms, and variance $\sigma_{L}^2$. 
Then it represents a causal and invertible ARMA process. Moreover, its autocovariance generating function 
can be factorized as $G(z)=\frac{kM}{\mu}\,\frac{\theta(z)\theta(\frac1z)}{\phi(z)\phi(\frac1z)}$ with 
$k=\,\frac{\sigma_{L}^2\,Q^2(1)}{\theta^{\,2}(1)\,Q^2(0)}$, where $\phi(z)$ and $\theta(z)$
have all their zeros outside the unit circle, and no common zeros.  
\end{theorem}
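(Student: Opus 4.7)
The plan is to obtain the ARMA factorization by handling the denominator and numerator of the rational expression for $G(z)$ separately, isolating in each the linear factor that comes from $F(1)=1$ and splitting the remaining zeros into reciprocal pairs. Several structural facts about $P$ and $Q$ will be used repeatedly: since $F$ is a PGF, $F(0)=0$ and $F(1)=1$, so coprimality of $P,Q$ together with the analyticity of $F$ on the closed unit disk force $P(0)=0$, $Q(0)\neq 0$, $Q(1)=P(1)\neq 0$, and $Q$ has no zeros on $|z|\leq 1$.

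For the denominator, set $R(z):=Q(z)-P(z)$. Then $R(1)=0$ and a direct computation gives $R'(1)=-\mu\,Q(1)\neq 0$, so $R(z)=(z-1)R_0(z)$ with $R_0$ having no zeros on the closed unit disk, because $1-F(z)=R(z)/Q(z)$ vanishes there only at the simple point $z=1$. Normalizing by $\phi(z):=R_0(z)/R_0(0)$ produces a causal AR polynomial with $\phi(0)=1$, and the analogous manipulation of $Q(1/z)-P(1/z)$ produces $\phi(1/z)$. A short bookkeeping step using $(z-1)(1/z-1)=-(z-1)^2/z$ then collapses the denominator of $G(z)$ to $-Q(0)^2(z-1)^2 z^{-1}\,\phi(z)\phi(1/z)$.

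The numerator is where the main obstacle lies. Multiplying the Laurent polynomial $N(z):=Q(z)Q(1/z)-P(z)P(1/z)$ by an appropriate positive power of $z$ yields a genuine palindromic polynomial $\hat N(z)$. Two claims need to be established: first, that the Laurent expansion of $1-F(z)F(1/z)$ near $z=1$ begins with $-\sigma_L^2(z-1)^2$, so $\hat N(z)=(z-1)^2 H(z)$ with $H(1)=-\sigma_L^2 Q(1)^2\neq 0$ (using that the nonlattice hypothesis together with $0<f_1<1$ forces $\sigma_L^2>0$); second, that $\hat N$ has no other zeros on $|z|=1$. The second claim is the subtle property of characteristic functions alluded to in the introduction: a zero of $\hat N$ on $|z|=1$ forces $|F(z)|=1$, and for the PGF of an integer-valued nonlattice distribution this can occur on the unit circle only at $z=1$. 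Once both claims are in hand, $H$ is palindromic of even degree with no zeros on $|z|=1$, so its zeros partition into reciprocal pairs $\{r_j,1/r_j\}$ with $|r_j|>1$; taking $\theta(z):=\prod_j(1-z/r_j)$ yields an invertible MA polynomial with $\theta(0)=1$ and $H(z)=c\,z^{\deg\theta}\theta(z)\theta(1/z)$ for some scalar $c$.

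Substituting these factorizations into $G(z)$, the powers of $z$ and the $(z-1)^2$ cancel and leave the asserted form $G(z)=\frac{kM}{\mu}\,\theta(z)\theta(1/z)/[\phi(z)\phi(1/z)]$. The constant $k$ is pinned down by evaluating $H(z)$ and $\theta(z)\theta(1/z)$ at $z=1$ alongside the $-Q(0)^2$ from the denominator, which yields precisely $k=\sigma_L^2 Q^2(1)/[\theta^2(1)Q^2(0)]$. The final assertion, that $\phi$ and $\theta$ share no zeros, is handled by contradiction: if $\phi(z_0)=\theta(z_0)=0$ then $|z_0|>1$, $R(z_0)=0$, and $N(z_0)=0$; the last identity rewrites as $P(z_0)[Q(1/z_0)-P(1/z_0)]=0$, and the first alternative forces $Q(z_0)=P(z_0)=0$ contradicting coprimality, while the second forces $1/z_0$ to be a zero of $R$ in $|z|<1$, contradicting the denominator analysis.
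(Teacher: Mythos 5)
Your proposal is correct and follows essentially the same route as the paper: the nonlattice property of the characteristic function to confine unit-circle zeros to $z=1$, the expansion $1-F(z)F(1/z)=-\sigma_L^2(z-1)^2+o((z-1)^2)$ to extract the double zero, a reciprocal-pair (palindromic) factorization of the numerator into $\theta(z)\theta(1/z)$, and evaluation at $z=1$ to pin down $k$. The only differences are cosmetic (you work with the genuine palindromic polynomial $z^mN(z)$ rather than the Laurent polynomial and its root quartets, and you make explicit a few points the paper leaves implicit, such as $R'(1)=-\mu Q(1)\neq0$ and $\sigma_L^2>0$).
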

\noindent  Formula for $k$ given in Blight (1989) has a missing factor. We prove \refT{ARMAfac} in the next section. 

\section{ARMA factorization}\label{s4}

In this section we prove our main result, \refT{ARMAfac}.
First, recall a result on nonlattice distributions, which 
is crucial to factorizing \eq genandgen/. Substituting $z=e^{it}$ into the probability generating function 
$F(z)$ we get exactly the characteristic function $\chi(t)=F(e^{it})$ of the lifetime distribution $L$. 
Of course, any characteristic function has $\chi(0)=1$, which corresponds to $F(1)=1$.  
But it turns out that for nonlattice distributions $|\chi(t)|\neq 1$ on $(0,2\pi)$.
In other words, for nonlattice lifetime distributions $F(z)\neq1$ on the 
unit circle except at $z=1$. The following Lemma also shows that in equation \eq genandgen/ $Q(z)-P(z)$ and  
$Q(z)Q(1/z)-P(z)P(1/z)$ have no common zeros on the unit circle except 
at $z=1$.
\begin{lemma}\label{L:zeros} 
Let $f_n$, $n\in \mathrm{Z}$ be a nonlattice distribution and 
$F(z)$ be its probability generating function. Assume that $F(z)$ is rational and 
$F(z)=P(z)/Q(z)$ in lowest terms, i.e. $P(z)$ and $Q(z)$ are polynomials with no common factors.
Then $1-F(z)$ and $1-F(z)F(1/z)$ have only one zero on the unit circle, namely $z=1$, 
and all other zeros are outside the unit circle. Moreover, $z=1$ is the only common zero of $1-F(z)$ and $1-F(z)F(1/z)$, as well as of $Q(z)-P(z)$ and $Q(z)Q(1/z)-P(z)P(1/z)$.  
\end{lemma}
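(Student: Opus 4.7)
The lemma really packages two claims: a location assertion for the zeros of $1-F$ (only at $z=1$ on the unit circle, everywhere else strictly outside) and a common-zero assertion linking $1-F(z)$, $1-F(z)F(1/z)$, and their polynomial counterparts. My plan is to dispatch both with the same two tools: the trivial triangle-inequality bound $|F(z)|<1$ on the open unit disk, and the classical characterization of nonlatticeness as $|\chi(t)|<1$ for $t\in(0,2\pi)$, where $\chi(t)=F(e^{it})$ is the characteristic function of $L$.

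Before the main argument I would record a preliminary observation that streamlines everything: since $\sum f_n=1$, the power series $\sum f_n z^n$ converges absolutely on the closed unit disk, so $F=P/Q$ is analytic there and has no pole; combined with coprimality of $P$ and $Q$, this forces $Q(z)\neq 0$ for $|z|\le 1$. Consequently, zeros of $1-F(z)$ on $\{|z|\le 1\}$ coincide with zeros of $Q(z)-P(z)$ there, and on the unit circle (where $|1/z|=1$ as well) zeros of $1-F(z)F(1/z)$ coincide with zeros of $Q(z)Q(1/z)-P(z)P(1/z)$. This lets me argue the rational and polynomial versions in one breath.

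For the location claim, on the unit circle $z=e^{it}$ one has $F(1/z)=\overline{F(z)}$ by real coefficients, so $1-F(z)=1-\chi(t)$ and $1-F(z)F(1/z)=1-|\chi(t)|^2$ both vanish only when $|\chi(t)|=1$; nonlatticeness forces $t=0$, i.e.\ $z=1$. For $|z|<1$, the triangle inequality gives $|F(z)|\le\sum_{n\ge 1}f_n|z|^n<1$, so $1-F(z)$ has no interior zero, finishing the ``all other zeros outside'' portion for $1-F$. For the common-zero claim, any $z_0$ with $F(z_0)=1$ and $F(z_0)F(1/z_0)=1$ automatically satisfies $F(1/z_0)=1$, and applying the previous conclusion to both $z_0$ and $1/z_0$ forces $|z_0|\ge 1$ and $|z_0|\le 1$, hence $|z_0|=1$ and then $z_0=1$ by the unit-circle step; the preliminary remark transfers this to the polynomial pair. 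The only genuine obstacle is the nonlattice step itself: the equivalence between the nonlattice hypothesis and $|\chi(t)|<1$ on $(0,2\pi)$ is nontrivial and I would simply cite Feller rather than reprove it. Everything else is bookkeeping.
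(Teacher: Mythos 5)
Your proposal is correct and follows essentially the same route as the paper: the characterization of nonlatticeness via $|\chi(t)|<1$ on $(0,2\pi)$ handles the unit circle, the bound $|F(z)|<1$ on the open disk handles the interior and the common-zero argument (a common zero $z_0\neq 1$ would force $F=1$ at whichever of $z_0$, $1/z_0$ lies inside), and coprimality of $P$ and $Q$ transfers everything to the polynomial pair. The only (harmless) differences are that you get $|F(z)|<1$ for $|z|<1$ directly from the triangle inequality where the paper invokes the maximum modulus principle, and your preliminary remark that $Q$ is zero-free on the closed disk makes explicit a transfer step the paper states without justification.
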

\begin{proof} 
It is proved in Gnedenko and Kolmogorov (1968) that 
$|\chi(t)|<1$ on $(0,2\pi)$ except when $t=0$ if $f_n$ is nonlattice.
This means that $|F(z)|<1$ for $|z|=1$ and $z\neq1$. 
Hence, on the unit circle if $z\neq1$, then $|1-F(z)|\geq|1-|F(z)||>0$, 
and 
$1-F(z)F(1/z)=1-|\chi(t)|^2>0$. 
Consequently, $1-F(z)$ and $1-F(z)F(1/z)$ have no zeros on the unit circle 
except at $z=1$.
Since $F(z)F(1/z)=|P(z)|^2/|Q(z)|^2$,
we see that $Q(z)Q(1/z)-P(z)P(1/z)>0$ on the unit circle for $z\neq1$, 
which means $Q(z)Q(1/z)-P(z)P(1/z)$ also has only $z=1$ as a zero on the unit circle. 

By the maximum modulus principle from complex analysis, $|F(z)|<1$ for all $|z|<1$. Thus
$|1-F(z)|\geq|1-|F(z)||>0$ for all $|z|<1$. We conclude that 
except for $z=1$ all zeros of $1-F(z)$ are outside the unit circle. 
Suppose $z^{*}$ is a common zero of $1-F(z)$ and $1-F(z)F(1/z)$ and $z^*\neq 1$. 
Then $F(z^*)=1$ and $F(1/z^*)=1$. By the above, $z^{*}$ cannot be on the unit circle
so $z^{*}$ or $1/z^{*}$ is inside of it. But this contradicts 
$|F(z)|<1$ for $|z|<1$.

Since $P(z)$ and $Q(z)$ have no common factors $1-F(z)$ and $Q(z)-P(z)$ have the same zeros. 
From the above we conclude that $Q(z)-P(z)$ have all zeros 
outside the unit circle except for $z=1$. Analogously, $Q(z)Q(1/z)-P(z)P(1/z)$ and $1-F(z)F(1/z)$ have the same zeros. We conclude that $Q(z)-P(z)$ and $Q(z)Q(1/z)-P(z)P(1/z)$ 
have no common zeros except for $z=1$. 
\end{proof}

Next we investigate the behavior of $1-F(z)$ and $1-F(z)F(1/z)$ near their common zero $z=1$.
Applying Taylor series expansion to $F(z)$ around $z=1$ one gets $F(z)=1+a(z-1)+b(z-1)^2+o((z-1)^2)$. 
Also, expanding $1/z$ around $z=1$ we have
$$
\frac1z=\frac1{1+(z-1)}=1-(z-1)+(z-1)^2+o((z-1)^2).
$$
Since $z=1$ is a fixed point of $1/z$ we can compose the Taylor expansions:
\begin{multline*}
F\left(\frac1z\right)=1+a(\frac1z-1)+b(\frac1z-1)^2+o\left((\frac1z-1)^2\right)\\
=1+a\left[-(z-1)+(z-1)^2\right]+b(z-1)^2+o((z-1)^2)\\
=1-a(z-1)+(a+b)(z-1)^2+o((z-1)^2).
\end{multline*}
This yields $F(z)F\left(\frac1z\right)=1+(a+2b-a^2)(z-1)^2+o((z-1)^2).$
Thus, $1-F(z)F(1/z)$ has a double zero at $z=1$ unless $a+2b-a^2=0$. 
But $a=F'(1)=E[L]$ is the first moment of lifetime, and $2b=F''(1)=E[L^2]-E[L]$. 
Therefore, $Var[L]=a+2b-a^2$ is the variance of $L$. 
For notation, let $\sigma_{L}^2=a+2b-a^2$, then it is easy to verify that
\beq
F(z)F\left(\frac1z\right)=1+\sigma_{L}^2(z-1)^2+o((z-1)^2).
\elabel{FFLaur}
\eeq

We now factorize equation \eq genandgen/ in the form \eq gene/. 
Recall that we assume $F(z)=P(z)/Q(z)$ in lowest terms. Since $F(1)=1$ the difference 
$Q(z)-P(z)$ from the denominator of \eq genandgen/ 
has a zero at $z=1$. By \refL{zeros}, $Q(z)-P(z)$ can be factorized as 
\[
Q(z)-P(z)=(1-z)Q(0)\phi(z),
\]
\noindent where the polynomial $\phi(z)$ has all
zeros outside the unit circle. We factored out $Q(0)$ to make 
the constant term of $\phi(z)$  equal to $1$
and $\phi(z)=1-\phi_{1}z-\ldots-\phi_{p}z^p$ for some integer $p$
and constants $\phi_i$. After dividing out common factors  
the denominator of \eq genandgen/ takes the desired form (see \eq gene/):
\beq
\frac{[Q(z)-P(z)][Q(1/z)-P(1/z)]}{(1-z)(1-1/z)Q(0)^2}=\phi(z)\phi(1/z).
\elabel{defac}
\eeq
\noindent It remains to factorize the numerator. Here are two simple but important 
observations concerning $Q(z)Q(1/z)-P(z)P(1/z)$. If $a$ is a zero then $1/a$ is also a zero,
and if $a$ is a complex zero then $\baar{a}$ is also a zero since $P(z)$ and $Q(z)$ have real coefficients.
Therefore, zeros of $Q(z)Q(1/z)-P(z)P(1/z)$ come in quartets unless 
some of $a$, $\baar{a}$, $1/a$, $1/\baar{a}$ coincide. 
The latter occurs in two cases. If $a=\baar{a}$ then $a$ is real and the quartet reduces to a real pair $a$, $1/a$;
if $a$ is complex and on the unit circle the quartet reduces to a complex conjugate 
pair $a$, $\baar{a}$.

In fact, $Q(z)Q(1/z)-P(z)P(1/z)$ is closely related to palindromic polynomials in which 
coefficients read the same from left to right as from right to left. Namely, it becomes 
a palindromic polynomial after being multiplied by the 
highest power of $z$. Zeros of real palindromic polynomials 
also generically come in quartets $a$, $\baar{a}$, $1/a$, and $1/\baar{a}$.

\begin{lemma}\label{L:numfac}
For a nonlattice lifetime distribution with rational generating 
function $F(z)=P(z)/Q(z)$, written in lowest terms, there exist a real polynomial $\theta(z)$ with all zeros outside the 
unit circle, and a constant $c$ such that 
\[
Q(z)Q(1/z)-P(z)P(1/z)=c\,(1-z)(1-1/z) \theta(z)\theta(1/z),
\]
\noindent where $\theta(z)=1+\theta_1z+\theta_2z^2+\cdots+\theta_qz^q$ for 
some integer $q$ and real constants $\theta_i$.
\end{lemma}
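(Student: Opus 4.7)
The strategy is to exploit the self-reciprocal (palindromic) structure of the Laurent polynomial $N(z):=Q(z)Q(1/z)-P(z)P(1/z)$ and reduce the claim to a factorization of an ordinary palindromic polynomial.

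First I would choose $m$ so that $\widetilde N(z):=z^m N(z)$ is a genuine polynomial of degree $2m$. Since $N(z)=N(1/z)$, the polynomial $\widetilde N$ is palindromic: $\widetilde N(z)=z^{2m}\widetilde N(1/z)$. By \refL{zeros} the only zero of $N$ on the unit circle is $z=1$. Since $P$ and $Q$ are coprime and $F(1)=1$ forces $P(1)=Q(1)$, we have $Q(1)\neq 0$, and the expansion \eq FFLaur/ then gives
\[
N(z)=Q(z)Q(1/z)\bigl[1-F(z)F(1/z)\bigr]=-\sigma_L^2\,Q(1)^2\,(z-1)^2+o((z-1)^2),
\]
so $z=1$ is \emph{exactly} a double zero of $N$. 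Hence I can write $\widetilde N(z)=(z-1)^2 S(z)$ with $S$ a polynomial of degree $2(m-1)$; direct verification shows $S$ is again palindromic and has no zeros on the unit circle.

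Next I would assemble $\theta(z)$ from the zeros of $S$. Because $S$ has real coefficients and is self-reciprocal, its zeros partition into orbits $\{\alpha,\overline\alpha,1/\alpha,1/\overline\alpha\}$, degenerating to real pairs $\{\alpha,1/\alpha\}$ when $\alpha$ is real and never to on-circle conjugate pairs since $S$ avoids the unit circle. From each orbit I select those representatives of modulus strictly greater than $1$. Counted with multiplicity, these form a list $\alpha_1,\dots,\alpha_{m-1}$ closed under complex conjugation, and I set
\[
\theta(z):=\prod_{i=1}^{m-1}\Bigl(1-\frac{z}{\alpha_i}\Bigr).
\]
Then $\theta$ is a real polynomial of degree $m-1$ with constant term $1$ and all zeros outside the unit circle. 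By construction $z^{m-1}\theta(z)\theta(1/z)$ is a polynomial of degree $2(m-1)$ with the same zeros and multiplicities as $S$, so $S(z)=C\,z^{m-1}\theta(z)\theta(1/z)$ for a real constant $C$.

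Substituting back and using $(z-1)^2=-z(1-z)(1-1/z)$ yields
\[
N(z)=z^{-m}\widetilde N(z)=z^{-m}(z-1)^2 S(z)=-C(1-z)(1-1/z)\,\theta(z)\theta(1/z),
\]
the desired factorization with $c=-C$. The main obstacle is the bookkeeping required to confirm that the zeros of $S$ really do pair up as claimed, so that $\theta$ comes out with real coefficients and degree exactly $m-1$: this is precisely where the palindromic symmetry, the real-coefficient (conjugate) symmetry, and \refL{zeros} (which rules out accidental zeros on $|z|=1$) jointly do the work.
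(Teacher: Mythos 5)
Your proposal is correct and follows essentially the same route as the paper: factor out the double zero at $z=1$ (justified by \eq FFLaur/ and $\sigma_L^2>0$), use \refL{zeros} to exclude other zeros on the unit circle, and then group the remaining zeros into reciprocal--conjugate quartets and real reciprocal pairs, assembling $\theta(z)$ from the representatives outside the unit circle. Your passage through the explicitly palindromic polynomial $z^mN(z)$ and its quotient $S$ is just a slightly more formal packaging of the paper's argument, with the welcome extra care of checking that $z=1$ is a zero of order exactly two and that the constant $c$ comes out consistently.
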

\begin{proof}
Since $P(z)$ and $Q(z)$ have no common factors, $1-F(z)F(1/z)$ and $Q(z)Q(1/z)-P(z)P(1/z)$ have identical zeros.
It follows from \eq FFLaur/ that $z=1$ is a double zero of the former and therefore of the latter. In other words, 
$(1-z)(1-1/z)$ can be factored from $Q(z)Q(1/z)-P(z)P(1/z)$. \refL{zeros} tells us that $Q(z)Q(1/z)-P(z)P(1/z)$ has 
no other zeros on the unit circle. Therefore, the remaining factors come in quartets 
$$
(1-\frac{1}{a_j}z),(1-\frac{1}{\baar{a_j}}z),(1-\frac{1}{a_j}\frac{1}{z}),(1-\frac{1}{\baar{a_j}}\frac{1}{z}),
$$
with $a_j$ complex and $|a_j|>1$ or pairs 
$$
(1-\frac{1}{a_k}z)(1-\frac{1}{a_k}\frac{1}{z})
$$
with $a_k$ real and $|a_k|>1$. Define $\theta(z)$ to be the product of all factors $(1-\frac{1}{a_j}z)(1-\frac{1}{\baar{a_j}}z)$ in the first case, and all factors $(1-\frac{1}{a_k}z)$ in the second case.
It is clear that $\theta(z)$ has real coefficients since
$$
(1-\frac{1}{a_j}z)(1-\frac{1}{\baar{a_j}}z)=1-(\frac{1}{a_j}+\frac{1}{\baar{a_j}})z+\frac{1}{|a_j|^2}z^2.
$$
\end{proof}
\noindent Now we are in a position to prove the main theorem. 
\begin{proof}[Proof of {\rm\bf \refT{ARMAfac}}]  
Dividing the numerator and the denominator of equation \eq genandgen/ 
by $(1-z)(1-1/z)Q(0)^2$ we get \eq defac/ as the new denominator.
For the numerator we apply \refL{numfac} to get a real 
polynomial $\theta$ satisfying
\beq
k\,\theta(z)\theta(\frac1z)=\,\frac{Q(z)Q(\frac{1}{z})-P(z)P(\frac{1}{z})}{(1-z)(1-\frac1z)Q^2(0)},
\elabel{thdef}
\eeq
\noindent where $k$ is selected to make $\theta(z)$ have unit constant term. 
To compute $k$ we divide both sides of \eq thdef/ by $Q(z)Q(1/z)$ and get 
$$
k\,\frac{\theta(z)\theta(\frac1z)}{Q(z)Q(\frac{1}{z})}
=\,\frac{1-F(z)F\left(\frac1z\right)}{(1-z)(1-\frac1z)Q^2(0)}
$$
Set $z\to1$ on both sides. The lefthand side becomes simply $k\,\theta^{\,2}(1)/Q^2(1)$. 
The righthand side is seen from \eq FFLaur/ to approach $\sigma_{L}^2/Q^2(0)$. 
Solving for $k$ yields the desired formula. Since $k>0$ our $Y_t$ is an ARMA time series. 
By Lemmas \ref{L:zeros} and \ref{L:numfac}, $\phi(z)$ and $\theta(z)$ have 
all zeros outside the unit circle and no common zeros. It follows that the corresponding 
ARMA process is causal and invertible (Brockwell and Davis, 1991, Ch.3).
\end{proof}

\section{Binomial ARMA$(p,p-1)$ time series}\label{s5}

In this section we show how to generate some binomial ARMA$(p,p-1)$
time series using \refT{ARMAfac}. We use distributions
with constant hazard rates after lag $p$ as lifetimes. We also discuss Markov properties 
of the generated series.

If a lifetime distribution has a constant hazard rate after lag $2$, the probability mass
function is $P(L=n)=f_{3} r^{n-3}$ with $0<f_{3},~ r<1$ for $n\geq 3$. It is clearly nonlattice.
It can also be shown that the hazard rate is $h_k=P(L=k|L\geq k)=(1-r)$ for $k\geq3$.
The probability generating function of $L$ is 
\[
F(z)=\frac{z[f_1+(f_2-f_1r)z+(f_3-f_2r)z^2]}{1-rz}.
\] 
\noindent From the last section we know that
$Q(z)=1-rz$ and $P(z)=z[f_1+(f_2-f_1r)z+(f_3-f_2r)z^2]$. 
Plugging $z=1/r$ into $P(z)$ we get $P(1/r)=f_3/r^2\neq0$. 
Since $1/r$ is the only zero of $Q(z)$ polynomials $Q(z)$ and $P(z)$ have no common factors. 

To factorize the covariance generating function we first compute 
\[
Q(z)-P(z)=1-(r+f_1)z-(f_2-f_1r)z^2-(f_3-f_2r)z^3=(1-z)(1-\phi_1z-\phi_2z^2),
\]
\noindent with $\phi_1=r+f_1-1$, $\phi_2=f_2r-f_3$.
The numerator of \eq genandgen/, $Q(z)Q(z^{-1})-P(z)P(z^{-1})$,
has a factor $(1-z)(1-z^{-1})$. Besides a double zero at $z=1$ there exists another pair of zeros, 
$a_1$ and $a_1^{-1}$. Since $Q(z)Q(z^{-1})-P(z)P(z^{-1})=(1-z)(1-z^{-1})(\pi_0 z +\pi_1+\pi_0z^{-1})$, 
where $\pi_0=f_1(f_3-f_2r)$, $\pi_1=f_1f_2(1-r)^2+f_1f_3(2-r)+r(1-f_1^2-f_2^2)+f_2f_3$, 
one can solve for $a_1$ from $\pi_0 z +\pi_1+\pi_0z^{-1}=0$ and get 
\[
a_1=\frac{-\pi_1-\sqrt{\pi_{1}^2-4\pi_0^2}}{2\pi_0}.
\]
\noindent Letting $\theta=-a^{-1}_1$ one has as in \refL{numfac}
\beq
Q(z)Q(z^{-1})-P(z)P(z^{-1})=k(1-z)(1-\frac{1}{z})(1+\theta z)(1+\theta z^{-1}),
\elabel{bfnum}
\eeq
\noindent where  $k$ can be found from the formula in \refT{ARMAfac}, or 
by comparing the constant terms on both sides of \eq bfnum/. This yields
\[
k=\frac{(1-f_1^2-f_2^2-f_3^2)+r^2(1-f_1^2-f_2^2)+2f_1f_2r+2f_2f_3r}{2+2\theta^2-2\theta}.
\]

\noindent The autocovariance generating function of $Y_t$  is 
\[
G(z)=\frac{M k}{\mu} \frac{(1+\theta z)(1+\theta z^{-1})}{(1-\phi_1z-\phi_2z^2)(1-\phi_1z^{-1}-\phi_2z^{-2})}.
\]
\noindent An AR$(2, 1)$ type stochastic difference equation for $Y_t$ is now readily written. 

More generally, suppose $L$ has a constant hazard rate after lag $p$. Then $L$ has 
$P(L=n)=f_{p+1} r^{n-p-1}$ with $0<f_{p+1},~ r<1$ for $n\geq p+1$. 
The probability generating function of $L$ can be represented by 
a ratio of two polynomials as follows
\begin{eqnarray*}
F(z)&=&f_1 z+f_2 z^2+\ldots+f_{p}z^{p}+\frac{f_{p+1}z^{p+1}}{1-rz}\\\nonumber
&=&\frac{z[f_1+(f_2-f_1r)z+\ldots+(f_{p+1}-f_pr)z^{p}]}{1-rz}.
%\elabel{armapqgene}
\end{eqnarray*} 

\noindent As above, we conclude that $P(z)$ and $Q(z)$ have no common factors
since $P(1/r)=f_{p+1}/r^p\neq0$. By \refT{ARMAfac}, $Q(z)-P(z)$ can be factorized as 
$(1-z)(1-\phi_1z-\ldots-\phi_{p}z^p)$ and 
$Q(z^{-1})-P(z^{-1})$ can be factorized as $(1-z^{-1})(1-\phi_1z^{-1}-\ldots-\phi_{p}z^{-p})$
for some real constants $\phi_1,\ldots,\phi_p$. 
Explicit factorization of $Q(z)Q(z^{-1})-P(z)P(z^{-1})$ is no longer possible but \refT{ARMAfac} still ensures
that the stationary time series has ARMA$(p,p-1)$ structure. 

Now we consider the Markov property for our binomial ARMA$(p,p-1)$ processes. For simplicity 
we only treat the case $p=2$, but the proof is analogous, albeit more cumbersome,
for general $p$. The trivariate binomial distribution mentioned below is discussed by
Chandrasekar and Balakrishnan (2002). 
\begin{theorem}\label{T:Markov} Let $Y_t=\sum_{i=1}^MX_{i,t}$, where $X_{i,t}$, $i=\{1, \ldots, M\}$ are
the underlying Bernoulli series. Then $Y_t$ is a second-order Markov chain, i.e. $Y_t$ is independent of 
$\{Y_{t-3}, Y_{t-4},\ldots, Y_0\}$. The vector $(Y_t, Y_{t-1}, Y_{t-2})$ 
has the trivariate binomial distribution with the moment generating function 
\beq
E[e^{Y_ts_1}e^{Y_{t-1}s_2}e^{Y_{t-2}s_3}]=(q+\sum_{1\le i\le 3}p_ie^{s_i}+\sum_{1\le i\le 3}\sum_{1\le j\le 3}p_{ij}e^{s_i}e^{s_j}+p_{123}e^{s_1}e^{s_2}e^{s_3})^{M}.
\elabel{mopgtt-2}
\eeq 
\end{theorem}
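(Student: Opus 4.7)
The plan is to split the statement into the trivariate binomial claim about $(Y_t,Y_{t-1},Y_{t-2})$ and the second-order Markov claim, and handle them in that order. The underlying sequences $X_{1,\cdot},\dots,X_{M,\cdot}$ are by construction i.i.d., so the triples $(X_{i,t},X_{i,t-1},X_{i,t-2})$ for $i=1,\dots,M$ are i.i.d.\ $\{0,1\}^3$-valued random vectors. Denoting by $q,p_i,p_{ij},p_{123}$ the eight atomic probabilities on $\{0,1\}^3$ (indexed by which coordinates equal~$1$), the MGF of one triple equals $q+\sum_{i} p_i e^{s_i}+\sum_{i<j} p_{ij} e^{s_i+s_j}+p_{123} e^{s_1+s_2+s_3}$. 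Since $(Y_t,Y_{t-1},Y_{t-2})=\sum_{i=1}^{M}(X_{i,t},X_{i,t-1},X_{i,t-2})$, independence across $i$ turns the joint MGF into the $M$-th power of the single-triple MGF, producing exactly \eq mopgtt-2/ and, by definition, certifying the trivariate binomial distribution.

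For the Markov claim, I would first verify that each chain $X_{i,\cdot}$ is individually second-order Markov, using the constant hazard $1-r$ for $k\ge 3$. A case analysis on $(X_{i,t-1},X_{i,t-2})$ gives $P(X_{i,t}=1\mid\text{past})=f_1$ when $X_{i,t-1}=1$ (age $1$, independently of $X_{i,t-2}$), $=f_2/(1-f_1)$ when $X_{i,t-1}=0,\,X_{i,t-2}=1$ (age $2$), and $=1-r$ when $X_{i,t-1}=X_{i,t-2}=0$ (age $\ge 3$, by memorylessness). Since these depend only on the last two values, $X_{i,\cdot}$ is second-order Markov. By independence of the $M$ chains, the process $\mathbf{S}_t:=((X_{i,t-1},X_{i,t-2}))_{i=1}^M$ is then first-order Markov and $Y_t$ is conditionally independent of $(Y_s)_{s\le t-3}$ given $\mathbf{S}_t$.

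The decisive step, and the main obstacle, is to pass from the fine conditioning on $\mathbf{S}_t$ to the coarse conditioning on $(Y_{t-1},Y_{t-2})$. With $n_{ab}:=\#\{i:(X_{i,t-1},X_{i,t-2})=(a,b)\}$, the coincidence of hazards in states $(1,0)$ and $(1,1)$ (both equal to $f_1$) means that the conditional law of $Y_t$ given $\mathbf{S}_t$ is the convolution $B(Y_{t-1},f_1)\ast B(n_{01},\,f_2/(1-f_1))\ast B(n_{00},\,1-r)$, where $n_{01}=Y_{t-2}-m$ and $n_{00}=M-Y_{t-1}-Y_{t-2}+m$ for the single free parameter $m:=n_{11}$. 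Second-order Markov for $\{Y_t\}$ thus reduces to showing that the conditional law of $m$ given $(Y_s)_{s\le t-1}$ agrees with its law given only $(Y_{t-1},Y_{t-2})$. I would attack this via exchangeability of the $M$ i.i.d.\ chains: under any permutation-symmetric conditioning the counts $(n_{ab})$ retain a constrained-multinomial form, and the $M$-th-power factorization of the joint MGF computed in the first paragraph should yield a closed-form conditional law in which the $m$-marginal depends on the past only through $(Y_{t-1},Y_{t-2})$.
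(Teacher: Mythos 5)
Your first paragraph is exactly the paper's argument for \eq mopgtt-2/: the triples $(X_{i,t},X_{i,t-1},X_{i,t-2})$ are i.i.d.\ across $i$, so the joint MGF of their sum is the $M$-th power of the single-triple MGF built from the eight atoms $q,p_i,p_{ij},p_{123}$; that part is fine. Your verification that each individual chain is second-order Markov via the age/hazard case analysis also matches the paper's table of conditional probabilities \eq qc/.

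The problem is the step you yourself call decisive. You correctly reduce the Markov claim for $Y_t$ to showing that the conditional law of $m=n_{11}$ given $(Y_s)_{s\le t-1}$ depends only on $(Y_{t-1},Y_{t-2})$, but then you only assert that exchangeability ``should yield'' this. That is not a proof, and in fact the reduction cannot be completed this way because the claim is false. Take $M=2$, $f_1=1/2$, $f_2=1/10$, $r=1/2$ (so $f_3=(1-r)(1-f_1-f_2)=1/5$). Conditioning on $Y_{t-3}=2$ forces both chains to renew at $t-3$, so each chain's $(X_{t-2},X_{t-1})$ has law $(1,1)\mapsto f_1^2$, $(1,0)\mapsto f_1(1-f_1)$, $(0,1)\mapsto f_2$, $(0,0)\mapsto 1-f_1-f_2$, and on $\{Y_{t-1}=Y_{t-2}=1\}$ one gets $P(m=1)=\frac{f_1(1-f_1-f_2)}{f_1(1-f_1-f_2)+(1-f_1)f_2}=0.8$; conditioning instead on $Y_{t-3}=Y_{t-4}=0$ gives the per-chain law $(1,1)\mapsto(1-r)f_1$, $(1,0)\mapsto(1-r)(1-f_1)$, $(0,1)\mapsto r(1-r)$, $(0,0)\mapsto r^2$ and $P(m=1)=\frac{f_1r}{f_1r+(1-r)(1-f_1)}=0.5$ (the two agree only when $f_3=rf_2$). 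Since $P(Y_t=2\mid \mathbf{S}_t)$ equals $f_1(1-r)=0.25$ when $m=1$ and $f_1f_2/(1-f_1)=0.1$ when $m=0$, the two pasts yield $P(Y_t=2\mid Y_{t-1}=1,Y_{t-2}=1,\cdot)=0.22$ versus $0.175$, so the dependence on the deeper past survives the sum over $m$ (it can only cancel when $f_2=(1-r)(1-f_1)$, i.e.\ constant hazard after lag~$1$). You should also know that the paper's own proof does not confront this point: its identity \eq condi/ writes $P(Y_t\mid Y_{t-1},\dots,Y_0)$ as an unweighted sum over configurations of products of the $p_{\cdot\mid\cdot,\cdot}$, which is not a properly weighted conditional probability (for $M=2$ the right-hand side can exceed $1$) and tacitly assumes precisely the fine-to-coarse reduction you identified as the obstacle. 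So your diagnosis of where the difficulty sits is sharper than the paper's treatment, but your proposal leaves the theorem unproved, and the computation above indicates it cannot be repaired along the proposed lines.
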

\begin{proof}
We start by computing the following probabilities for the underlying Bernoulli series $X_{i,t}$.
\begin{eqnarray*}
p_1&:=&P(X_{i, t}=1, X_{i, t-1}=0, X_{i, t-2}=0)=\mu^{-1}(1-f_1-f_2);\nonumber\\
p_{13}&:=&P(X_{i, t}=1, X_{i, t-1}=0, X_{i, t-2}=1)=\mu^{-1}f_2;\nonumber\\
p_{12}&:=&P(X_{i, t}=1, X_{i, t-1}=1, X_{i, t-2}=0)=\mu^{-1}f_1(1-f_1);\quad 
\parbox{3cm}{~~~~~~~~~~~~~~~~~~~~~~~~~~~~}\nonumber\\
%\end{eqnarray*}
%\begin{eqnarray*}
p_{123}&:=&P(X_{i, t}=1,X_{i, t-1}=1, X_{i, t-2}=1)=\mu^{-1}f_1f_1; \nonumber\\
p_3&:=&P(X_{i, t}=0,X_{i, t-1}=0, X_{i, t-2}=1)=\mu^{-1}(1-f_1-f_2);\nonumber\\
p_{23}&:=&P(X_{i, t}=0, X_{i, t-1}=1, X_{i, t-2}=1)=\mu^{-1}f_1(1-f_1);\nonumber\\
p_{2}&:=&P(X_{i, t}=0, X_{i, t-1}=1, X_{i, t-2}=0)=\mu^{-1}(1-f_1)^2;\nonumber\\
q&:=&P(X_{i, t}=0, X_{i, t-1}=0, X_{i, t-2}=0)=1-\sum_{1\le i\le 3}p_i-\sum_{1\le i\le 3}\sum_{1\le j\le 3}p_{ij}-p_{123}.
\end{eqnarray*}
\noindent The conditional probabilities of $X_{i,t}$ can also be explicitly computed. In particular, we use that 
$\mu=1-f_1+\frac{2-r-f_1-f_2}{1-r}$ to simplify $p_{1| 0, 0}$ and get the 
expression for $p_{0| 0, 0}$ from $p_{1| 0, 0}+p_{0| 0, 0}=1$.
\begin{eqnarray}
p_{1| 0, 0}&:=&P(X_{i,t}=1| X_{i,t-1}=0, X_{i,t-2}=0)=1-r;\nonumber\\
p_{1|0, 1}&:=&P(X_{i,t}=1| X_{i,t-1}=0, X_{i,t-2}=1)=f_2/(1-f_1);\quad \parbox{4.5cm}{~~~~~~~~~~~~~~~~~~~~~~~~~~~~}\nonumber\\
p_{1|1, 0}&:=&P(X_{i,t}=1| X_{i,t-1}=1, X_{i,t-2}=0)=f_1;\nonumber\\
p_{1|1,1}&:=&P(X_{i,t}=1| X_{i,t-1}=1, X_{i,t-2}=1)=f_1;\quad \parbox{5.5cm}{~~~~~~~~~~~~~~~~~~~~~~~~~~~~}\nonumber\\
p_{0|0,1}&:=&P(X_{i,t}=0| X_{i,t-1}=0, X_{i,t-2}=1)=(1-f_1-f_2)/(1-f_1);\nonumber\\
p_{0|1,1}&:=&P(X_{i,t}=0| X_{i,t-1}=1, X_{i,t-2}=1)=(1-f_1);\nonumber\\
%\end{eqnarray*}
%\begin{eqnarray}
p_{0|1,0}&:=&P(X_{i,t}=0| X_{i,t-1}=1, X_{i,t-2}=0)=(1-f_1);\nonumber\\
p_{0|0,0}&:=&P(X_{i,t}=0| X_{i,t-1}=0, X_{i,t-2}=0)\nonumber\\
&=&[1-\sum_{1\le i\le 3}p_i-\sum_{1\le i\le 3}\sum_{1\le j\le 3}p_{ij}-p_{123}]/[1-2\mu^{-1}+f_1\mu^{-1}]=r
\elabel{qc}
\end{eqnarray}
\noindent After some algebra one also finds that the probabilities conditioned on $X_{i,t-1}, \ldots, X_{i,0}$ are the same as above, 
\[
P(X_{i,t}| X_{i,t-1}, X_{i,t-2})=P(X_{i,t}| X_{i,t-1}, X_{i,t-2}, X_{i,t-3}\ldots, X_{i,0}),
\]

\noindent i.e. the underlying Bernoulli series $X_{i,t}$ is a second-order Markov chain.

Next, we need to find $P(Y_t|Y_{t-1}, \ldots, Y_0)$. To this end, let $\epsilon_j$, $j=1, \ldots, M$, 
be a zero-one vector with three components, and $\epsilon_j(i)$ denote its $i$'th component, $i=1, \ldots, 3$. 
Define a set of $\epsilon_j$'s by
\[
A_{Y_t|Y_{t-1}, Y_{t-2}}=\left\{\Lambda=(\epsilon_1, \ldots, \epsilon_{M})\left|\quad
\sum_{j=1}^M\epsilon_j(1)=Y_t, \sum_{j=1}^M\epsilon_j(2)=Y_{t-1}, \sum_{j=1}^M\epsilon_j(3)=Y_{t-2}\right.
\right\}.
\]
\noindent By independence and the Markov property of 
the underlying Bernoulli series $X_{i,t}$, we have 
\beq 
P(Y_t|Y_{t-1}, \ldots, Y_0)=\sum_{\Lambda\in A_{Y_t|Y_{t-1}, Y_{t-2}}}\Pi _{j=1}^M p_{\epsilon_j(1)|\epsilon_j(2),\epsilon_j(3)},
\elabel{condi}
\eeq

\noindent where $p_{\epsilon_j(1)|\epsilon_j(2),\epsilon_j(3)}$ can be calculated from 
\eq qc/. 
Since \eq condi/ is not affected by $\{Y_{t-3},\ldots, Y_0\}$, we 
conclude that $P(Y_t|Y_{t-1}, \ldots, Y_0)=P(Y_t|Y_{t-1}, Y_{t-2})$, so  
$\{Y_t\}$ is a second-order Markov chain. The formula for the moment generating function 
$E[e^{Y_ts_1}e^{Y_{t-1}s_2}e^{Y_{t-2}s_3}]$ follows from \eq qc/ by a straightforward 
computation.
\end{proof}

\section{Conclusions}\label{s8}

We proved that the renewal process method generates time series with ARMA type autocovariance 
under fairly broad assumptions. We also gave examples where the generated series have the Markov property. 
As a follow-up, estimation methods for ARMA($p,p-1$) models are worth investigating, for example, conditional least squares and maximum likelihood methods as in Cui and Lund (2009,2010). On a different note, our method can generate periodic count series if one incorporates periodic dynamics into the underlying renewal process. Periodicity is inherent 
in many physical processes, but periodic count series models are scarce in the literature.


\begin{thebibliography}{99}
\footnotesize


\bibitem{BD} Brockwell, P.J., Davis, R.A. (1991). 
{\it Time Series: Theory and Methods,} 2nd edn. New York: Springer.

\bibitem{Blight} Blight, P. A., (1989). 
Time series formed from the superposition of discrete renewal processes. 
{\it Journal of Applied Probability} 26: 189-195.

\bibitem{Chandra2002} Chandrasekar, B., Balakrishnan, N. (2002).
Some properties and a characterization of trivariate and multivariate binomial distributions.
{\it Statistics} 36: 211-218.

\bibitem{CuiandLund} Cui, Y., Lund, R. (2009). A new look at time series of counts. 
{\it Biometrika} 96: 781-792.

\bibitem{CuiandLund2010} Cui, Y., Lund, R. (2010). Inference in binomial AR(1) models. 
{\it Statistics and Probability Letters} 80: 1985-1990.

\bibitem{Cox} Cox, D.R., Smith W.L. (1954 ). 
On the superposition of renewal processes.
{\it Biometrika} 41: 91-99.

\bibitem{Feller} Feller, W. (1968). {\it An Introduction to Probability
Theory and Its Applications, Volume I}, 3rd edn. New York: John Wiley \& Sons Inc.

\bibitem{GK68} Gnedenko, B., Kolmogorov, A. (1968). {\it Limit distributions for sums of 
independent random variables}. MA: Addison-Wesley Publishing Co., Reading.

\bibitem{Klimko1978} Klimko, L.A., Nelson, P.I. (1978). 
On conditional least squares estimation for stochastic processes. 
{\it Annals of Statistics} 6: 629-642.

\bibitem{Kachour} Kachour, M., Yao, J.F. (2009). 
First-order rounded integer-valued autoregressive (RINAR(1)) process. 
{\it Journal of Time Series Analysis} 30: 417-448.

\bibitem{McKenzie2003} McKenzie, E. (2003). Discrete variate time series. 
In: {\it Stochastic Processes:  Modelling and Simulation, Handbook of 
Statistics, 21} (edited by D. N. Shanbhag and C. R. Rao). North-Holland, 
Amsterdam, 573-606.

\bibitem{PR} Priestley, M.B. (1981). {\it Spectral Analysis and Time Series}. 
London: Academic Press.

\bibitem{Ross} Ross, S.M. (1995). {\it Stochastic Processes,} 2nd edn. 
New York: John Wiley \& Sons Inc.

\bibitem{Wang} Wang, D., Zhang, H.(2011). 
Generalized RCINAR(p) process with signed thinning operator. 
{\it Communications in Statistics - Simulation and Computation} 40: 13-44.

\bibitem{Zhang} Zhang, H., Basawa, I.V., and Datta, S.(2007). 
First-order random coefficient integer-valued autoregressive processes. 
{\em Journal of Statistical Planning and Inference} 173: 212-229. 

\end{thebibliography}
\end{document}